\documentclass{amsart}
\usepackage{url}
\usepackage{graphicx}
\usepackage{hyperref}
\hypersetup{colorlinks=true, urlcolor=blue}
\newtheorem{thm}{Theorem}[section]

\newtheorem{lem}[thm]{Lemma}
\newtheorem{prop}[thm]{Proposition}
\theoremstyle{definition}

\theoremstyle{remark}
\newtheorem{rem}[thm]{Remark}
\newtheorem{ex}[thm]{Example}

\begin{document}
\title[An algorithm to estimate the vertices of a tetrahedron]{An algorithm to estimate the vertices of a tetrahedron from uniform random points inside}

\author[A.D. V\^{\i}lcu, G.E. V\^{\i}lcu]{Alina Daniela V\^{\i}lcu, Gabriel Eduard V\^{\i}lcu}

\date{}

\abstract In this paper, we give an algorithm to infer the positions of the vertices of an unknown tetrahedron, given a sample of points which are uniformly distributed within the tetrahedron. The accuracy of the algorithm is demonstrated using some numerical experiments.
\endabstract

\maketitle

\let\thefootnote\relax\footnotetext{{\bf 2010 Mathematics Subject Classification:} 65D18, 68U05, 60D05, 62F10, 65C05.}
\let\thefootnote\relax\footnotetext{{\bf Key Words:} computational geometry, stochastic geometry, shape inference, method of moments.}

\section{Introduction}

Let $A_1,A_2,A_3,A_4$ be four non-co-planar points in three-dimensional Euclidean space $\mathbb{E}^3$, having the Cartesian coordinates
$(a_1,b_1,c_1)$, $(a_2,b_2,c_2)$, $(a_3,b_3,c_3)$ and $(a_4,b_4,c_4)$, respectively. It is well known that the volume $V$ of the tetrahedron
$A_1A_2A_3A_4$ is given by (see, e.g., \cite{SRI})
\begin{equation}\label{1}
V=\frac{1}{6}|\Delta|,
\end{equation}
where
\begin{equation}\label{2}
\Delta=
\begin{vmatrix}
a_1 & b_1 & c_1 & 1 \\
a_2 & b_2 & c_2 & 1 \\
a_3 & b_3 & c_3 & 1 \\
a_4 & b_4 & c_4 & 1
\end{vmatrix}.
\end{equation}
In what follows we denote by $I(A_1A_2A_3A_4)$ the inner of the tetrahedron $A_1A_2A_3A_4$.

Let $M_i$, $1\leq i\leq n$, be $n$ independent random points uniformly distributed inside the domain  $I(A_1A_2A_3A_4)$. We suppose that the Cartesian coordinates $(x_i,y_i,z_i)$ of the points $M_i$, $1\leq i\leq n$, are known and we propose to estimate the Cartesian coordinates of the vertices of the
tetrahedron $A_1A_2A_3A_4$. This question, that could be called \emph{tetrahedral point fitting}, generalizes the \emph{triangle fitting} of \cite{SS}: the estimation of the vertices of a triangle with uniform random points inside. We note that such types of problems are of great interest not only in computational and stochastic geometry \cite{BR,MAN,ZIN}, but also in various fields, like computer graphics, parallel computation, image analysis, biomedical imaging and signal processing \cite{GR,LJS,LJ,MM,ZW}.

The approach, based on the method of moments, leads to a solution that is effective and easy to implement efficiently. The paper is organized as follows.
In Section 2, some mixed moments of a random variable uniformly distributed within the tetrahedron are explicitly identified in terms of the coordinates of the vertices. Next, in Section 3, a method of moments estimator is proposed, equating empirical moments and theoretical moments. This leads to a system of equations for the coordinates of the estimator. Subsequently, the solutions of the system are identified and deployed to an estimation algorithm. The accuracy of the algorithm is demonstrated in Section 4 using numerical experiments, based on a procedure developed by Rocchini and Cignoni \cite{RC} for producing random points uniformly distributed in a specified tetrahedron.

\section{Mixed moments for a random point uniformly distributed inside a tetrahedron}

Let $M(X,Y,Z)$ be a random point uniformly distributed inside the domain denoted above by $I(A_1A_2A_3A_4)$. Therefore, the random vector $(X,Y,Z)$ has the probability density function $f:\mathbb{R}^3\rightarrow[0,\infty)$ defined by
\begin{equation}\label{3}
f(x,y,z)=\left\{
           \begin{array}{ll}
             \frac{6}{|\Delta|}, & \hbox{for $(x,y,z)\in I(A_1A_2A_3A_4)$} \\
             0, & \hbox{otherwise}
           \end{array}.
         \right.
\end{equation}
Then the mixed moment $(i,j,k)$ for the random vector $(X,Y,Z)\sim Unif(I(A_1A_2A_3A_4))$, denoted by $\mu_{ijk}$, is given by
\begin{equation}\label{4}
\mu_{ijk}=\mathbb{E}[X^i Y^j Z^k].
\end{equation}

\begin{lem}\label{L1}
The Jacobian of the transformation $T:(0,1)^3\rightarrow I(A_1A_2A_3A_4)$, defined by $T(u,v,w)=(x,y,z)$,
where the components of $T$ are given by
\begin{equation}\label{5}
 \left\{
          \begin{array}{ll}
           x=x(u,v,w)=(1-w)a_1+(1-v)wa_2+(1-u)vwa_3+uvwa_4 & \hbox{} \\
           y=y(u,v,w)=(1-w)b_1+(1-v)wb_2+(1-u)vwb_3+uvwb_4 & \hbox{} \\
           z=z(u,v,w)=(1-w)c_1+(1-v)wc_2+(1-u)vwc_3+uvwc_4 & \hbox{}
          \end{array}
        \right.
\end{equation}
is
\begin{equation}\label{6}
J_T=vw^2\Delta,
\end{equation}
where $\Delta$ is given by (\ref{2}).
\end{lem}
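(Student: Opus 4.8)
The plan is to exploit the fact that all three coordinate functions in (\ref{5}) are assembled from the same four coefficients. Introducing the quantities $\lambda_1=1-w$, $\lambda_2=(1-v)w$, $\lambda_3=(1-u)vw$ and $\lambda_4=uvw$, a one-line check gives $\lambda_1+\lambda_2+\lambda_3+\lambda_4=1$, so that $T(u,v,w)=\lambda_1A_1+\lambda_2A_2+\lambda_3A_3+\lambda_4A_4$; in other words the $\lambda_i$ are the barycentric coordinates of the image point. This lets me factor $T$ as a composition $T=L\circ\Lambda$, where $\Lambda:(u,v,w)\mapsto(\lambda_2,\lambda_3,\lambda_4)$ records the three independent barycentric coordinates and $L$ is the affine map carrying them to Cartesian coordinates. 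By the chain rule, $J_T=\det(DL)\cdot\det(D\Lambda)$, which reduces the problem to two manageable $3\times 3$ determinants rather than one whose entries mix the $a_i,b_i,c_i$ with polynomials in $u,v,w$.

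For the outer map, eliminating $\lambda_1=1-\lambda_2-\lambda_3-\lambda_4$ yields $x=a_1+\lambda_2(a_2-a_1)+\lambda_3(a_3-a_1)+\lambda_4(a_4-a_1)$ and the analogous expressions for $y$ and $z$. Hence $DL$ is the constant matrix whose columns are the edge vectors $A_2-A_1$, $A_3-A_1$, $A_4-A_1$, and I would identify its determinant with $\Delta$ by applying to the determinant (\ref{2}) the row operations that subtract the first row from each of the other three and then expanding along the resulting last column; this gives $\det(DL)=-\Delta$.

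For the inner map I would simply differentiate the explicit polynomials $\lambda_2=(1-v)w$, $\lambda_3=(1-u)vw$, $\lambda_4=uvw$ with respect to $u,v,w$ and evaluate the $3\times 3$ determinant directly; expanding along the first column, whose top entry vanishes, the computation collapses to $\det(D\Lambda)=-vw^2$. Multiplying the two factors then produces $J_T=(-\Delta)(-vw^2)=vw^2\Delta$, as claimed. I do not anticipate any serious obstacle: the only genuine care lies in the sign bookkeeping, since the two minus signs coming from $DL$ and $D\Lambda$ must cancel to give the stated positive coefficient, and in confirming that the factorization is legitimate, which rests entirely on the identity $\sum_i\lambda_i=1$ observed at the outset. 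A reader preferring a self-contained route could instead expand the full $3\times 3$ Jacobian of (\ref{5}) by multilinearity in the vertex coordinates, but the composition above keeps the algebra transparent and isolates precisely where $\Delta$ enters.
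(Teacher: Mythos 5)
Your proof is correct, and I verified the two computations on which it rests: with $\lambda_1=1-w$, $\lambda_2=(1-v)w$, $\lambda_3=(1-u)vw$, $\lambda_4=uvw$ one indeed has $\lambda_1+\lambda_2+\lambda_3+\lambda_4=1$ and $L\circ\Lambda$ reproduces \eqref{5}; the constant matrix $DL$ with columns $A_2-A_1$, $A_3-A_1$, $A_4-A_1$ satisfies $\det(DL)=-\Delta$ by exactly the reduction of \eqref{2} you describe; and expanding $D\Lambda$ along its first column, whose top entry vanishes, gives $\det(D\Lambda)=-vw^2$, whence $J_T=(-\Delta)(-vw^2)=vw^2\Delta$. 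But your route is genuinely different from the paper's. The paper attacks the $3\times 3$ Jacobian of \eqref{5} head-on: the $u$-, $v$- and $w$-columns carry factors $vw$, $w$ and $1$ respectively, and after pulling out $vw^2$ and clearing the $u$- and $v$-dependent combinations by column operations one is left with $vw^2\det\left[A_4-A_3,\ A_3-A_2,\ A_2-A_1\right]$, which the paper identifies with $vw^2\Delta$ by bordering the $3\times 3$ determinant into a $4\times 4$ one with last row $(0,0,0,1)$ and successively adding each column to its left neighbour, recovering the homogeneous vertex matrix with columns in reversed (even) order, so that no sign ever appears. Your chain-rule factorization $T=L\circ\Lambda$ buys structural clarity: neither of your two determinants mixes the vertex coordinates with polynomials in $u,v,w$; the origin of $\Delta$ is isolated in the constant affine factor $L$; and the barycentric identity $\sum_i\lambda_i=1$ explains at a glance why $T$ carries the unit cube onto the tetrahedron, a fact the paper has to argue separately in the Remark following the Lemma. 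The price is the sign bookkeeping you correctly flag — the paper's normalization via consecutive edge differences produces $+\Delta$ outright, whereas your base-point-$A_1$ edge matrix and your $D\Lambda$ each contribute one of two cancelling minus signs — together with the small extra obligation, which you do discharge, of checking that the composition $L\circ\Lambda$ really equals the map \eqref{5}.
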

\begin{proof}
A straightforward computation shows that the Jacobian of $T$ is
\begin{eqnarray}
J_T&=&\begin{vmatrix}
\frac{\partial x}{\partial u} & \frac{\partial x}{\partial v} & \frac{\partial x}{\partial w} \\
\frac{\partial y}{\partial u} & \frac{\partial y}{\partial v} & \frac{\partial y}{\partial w}  \\
\frac{\partial z}{\partial u} & \frac{\partial z}{\partial v} & \frac{\partial z}{\partial w}
\end{vmatrix}\nonumber \\
&=&vw^2\begin{vmatrix}
a_4-a_3 & a_3-a_2  & a_2-a_1 \\
b_4-b_3 & b_3-b_2 & b_2-b_1  \\
c_4-c_3 & c_3-c_2 & c_2-c_1
\end{vmatrix}\nonumber \\
&=&vw^2\begin{vmatrix}
a_4-a_3 & a_3-a_2 & a_2-a_1 & a_1 \\
b_4-b_3 & b_3-b_2 & b_2-b_1 & b_1 \\
c_4-c_3 & c_3-c_2 & c_2-c_1 & c_1\\
0 & 0 & 0 & 1
\end{vmatrix}\nonumber \\
&=&vw^2\begin{vmatrix}
a_4 & a_3 & a_2 & a_1 \\
b_4 & b_3 & b_2 & b_1 \\
c_4 & c_3 & c_2 & c_1\\
1 & 1 & 1 & 1
\end{vmatrix}\nonumber\\
&=&vw^2\Delta.\nonumber
\end{eqnarray}

Therefore, it follows that the value of the Jacobian of the transformation $T$ is given by (\ref{6}).
\qed
\end{proof}

\begin{rem}
It is easy to see that the transformation $T$ defined by \eqref{5} maps the vertices of the unit cube to the vertices of the tetrahedron $A_1A_2A_3A_4$ as follows:
\[
(1,1,1)\mapsto A_4(a_4,b_4,c_4),
\]
\[
(0,1,1)\mapsto A_3(a_3,b_3,c_3),
\]
\[
(\clubsuit,0,1)\mapsto A_2(a_2,b_2,c_2),
\]
\[
(\clubsuit,\clubsuit,0)\mapsto A_1(a_1,b_1,c_1),
\]
where $\clubsuit$ is either 0 or 1.

In particular, we deduce that the edges of the cube are mapped to the edges of the tetrahedron (or contracted into the vertices $A_1$ and $A_2$, respectively),
because the formulas \eqref{5} for $x,y,z$ are linear in each coordinate separately.
Therefore we conclude that the faces of cube are mapped to the faces of tetrahedron (or contracted either into the vertex $A_1$ or into the edge $A_1A_2$),
due to the fat that the faces of tetrahedron are foliated by images of some line segments foliating the faces of cube. Finally, it follows that $T$ sends the interior of the unit cube onto the interior of the tetrahedron $A_1A_2A_3A_4$, as the interior of the tetrahedron is foliated by images of some line segments that foliate the interior of the unit cube.
\end{rem}

\begin{prop}\label{P1}
The random vector $(X,Y,Z)\sim Unif(I(A_1A_2A_3A_4))$ has the following mixed moments:
\begin{equation}\label{7}
\mu_{100}=\frac{1}{4}\sum_{i=1}^{4}a_i,
\end{equation}
\begin{equation}\label{8}
\mu_{010}=\frac{1}{4}\sum_{i=1}^{4}b_i,
\end{equation}
\begin{equation}\label{9}
\mu_{001}=\frac{1}{4}\sum_{i=1}^{4}c_i,
\end{equation}
\begin{equation}\label{10}
\mu_{200}=\frac{1}{10}\left(\sum_{i=1}^{4}a_i^2+\sum_{1\leq i<j\leq4}a_ia_j\right),
\end{equation}
\begin{equation}\label{11}
\mu_{020}=\frac{1}{10}\left(\sum_{i=1}^{4}b_i^2+\sum_{1\leq i<j\leq4}b_ib_j\right),
\end{equation}
\begin{equation}\label{12}
\mu_{002}=\frac{1}{10}\left(\sum_{i=1}^{4}c_i^2+\sum_{1\leq i<j\leq4}c_ic_j\right),
\end{equation}
\begin{equation}\label{13}
\mu_{300}=\frac{1}{20}\left(\sum_{i=1}^{4}a_i^3+\sum_{1\leq i<j\leq4}a_i^2a_j+\sum_{1\leq i<j<k\leq4}a_ia_ja_k\right),
\end{equation}
\begin{equation}\label{14}
\mu_{030}=\frac{1}{20}\left(\sum_{i=1}^{4}b_i^3+\sum_{1\leq i<j\leq4}b_i^2b_j+\sum_{1\leq i<j<k\leq4}b_ib_jb_k\right),
\end{equation}
\begin{equation}\label{15}
\mu_{003}=\frac{1}{20}\left(\sum_{i=1}^{4}c_i^3+\sum_{1\leq i<j\leq4}c_i^2c_j+\sum_{1\leq i<j<k\leq4}c_ic_jc_k\right),
\end{equation}
\begin{equation}\label{16}
\mu_{400}=\frac{1}{35}\left(\sum_{i=1}^{4}a_i^4+\sum_{1\leq i<j\leq4}a_i^3a_j+\sum_{1\leq i<j<k\leq4}a_i^2a_ja_k+a_1a_2a_3a_4\right),
\end{equation}
\begin{equation}\label{17}
\mu_{040}=\frac{1}{35}\left(\sum_{i=1}^{4}b_i^4+\sum_{1\leq i<j\leq4}b_i^3b_j+\sum_{1\leq i<j<k\leq4}b_i^2b_jb_k+b_1b_2b_3b_4\right),
\end{equation}
\begin{equation}\label{18}
\mu_{004}=\frac{1}{35}\left(\sum_{i=1}^{4}c_i^4+\sum_{1\leq i<j\leq4}c_i^3c_j+\sum_{1\leq i<j<k\leq4}c_i^2c_jc_k+c_1c_2c_3c_4\right),
\end{equation}
\begin{equation}\label{19}
\mu_{111}=\frac{1}{60}\left(\sum_{i=1}^{4}a_i\sum_{i=1}^{4}b_i\sum_{i=1}^{4}c_i+2\sum_{i=1}^{4}a_ib_ic_i\right).
\end{equation}
\end{prop}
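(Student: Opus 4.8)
The plan is to push every expectation \eqref{4} back to the unit cube through the map $T$ of Lemma \ref{L1}. Since the density \eqref{3} equals $6/|\Delta|$ on the tetrahedron while the Jacobian is $J_T=vw^2\Delta$ with $v,w>0$ on $(0,1)^3$, the factor $|\Delta|$ cancels and the change-of-variables formula gives, for all exponents,
\[
\mu_{ijk}=6\int_0^1\!\!\int_0^1\!\!\int_0^1 x^{\,i}\,y^{\,j}\,z^{\,k}\,vw^2\,du\,dv\,dw,
\]
with $x,y,z$ as in \eqref{5}. Before computing I would record two structural facts: $6\,vw^2\,du\,dv\,dw$ is a probability measure on the cube (it integrates to $6\cdot\frac12\cdot\frac13=1$), and the four coefficients
\[
\lambda_1=1-w,\quad \lambda_2=(1-v)w,\quad \lambda_3=(1-u)vw,\quad \lambda_4=uvw
\]
multiplying the vertices in \eqref{5} are nonnegative and sum to $1$, so that $x=\sum_i\lambda_i a_i$, $y=\sum_i\lambda_i b_i$, $z=\sum_i\lambda_i c_i$ are barycentric combinations of $A_1,\dots,A_4$.

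All the analysis then collapses to one family of integrals. Expanding a monomial $x^iy^jz^k$ produces a sum of terms $\lambda_1^{m_1}\lambda_2^{m_2}\lambda_3^{m_3}\lambda_4^{m_4}$, and integrating such a term against $6\,vw^2$ separates into one-dimensional Beta integrals $\int_0^1 t^p(1-t)^q\,dt=p!\,q!/(p+q+1)!$. Carrying this out once yields
\[
\mathbb{E}\bigl[\lambda_1^{m_1}\lambda_2^{m_2}\lambda_3^{m_3}\lambda_4^{m_4}\bigr]=\frac{6\,m_1!\,m_2!\,m_3!\,m_4!}{(m_1+m_2+m_3+m_4+3)!},
\]
the Dirichlet$(1,1,1,1)$ moment formula. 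This is the only integration in the argument; everything afterwards is algebra. In particular $\mathbb{E}[\lambda_i]=\tfrac14$, $\mathbb{E}[\lambda_i^2]=\tfrac1{10}$ and $\mathbb{E}[\lambda_i\lambda_j]=\tfrac1{20}$ for $i\neq j$, together with the analogous third- and fourth-order values.

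The pure moments then follow by collecting terms. Writing $\mu_{k00}=\sum a_{i_1}\cdots a_{i_k}\,\mathbb{E}[\lambda_{i_1}\cdots\lambda_{i_k}]$ and grouping the index tuples by their pattern of coincidences turns the multinomial expansion into a combination of the monomial symmetric functions of the $a_i$; matching multinomial coefficients against the factorial weights above should reproduce \eqref{7}, \eqref{10}, \eqref{13} and \eqref{16}, and the same argument applied to the $b_i$ and $c_i$ gives the companion identities \eqref{8}--\eqref{9}, \eqref{11}--\eqref{12}, \eqref{14}--\eqref{15} and \eqref{17}--\eqref{18}.

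The step I expect to carry the real weight is the mixed moment \eqref{19}, where
\[
\mu_{111}=\sum_{i,j,k}a_i\,b_j\,c_k\,\mathbb{E}[\lambda_i\lambda_j\lambda_k].
\]
Because the three summation indices now run over three \emph{distinct} arrays $a$, $b$, $c$, the symmetry that collapsed the pure moments is unavailable. I would evaluate $\mathbb{E}[\lambda_i\lambda_j\lambda_k]$ from the displayed formula in the three cases---all indices distinct, exactly one coinciding pair, and all three equal---and then sum over $(i,j,k)$ by splitting on these cases. The main obstacle is exactly this bookkeeping: one must account for every coincidence pattern, and hence for every monomial symmetric function in the coordinates, with its correct weight, before assembling the terms into the closed form.
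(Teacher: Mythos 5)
Your reduction to the unit cube is exactly the route the paper takes: its proof of Proposition \ref{P1} also applies Lemma \ref{L1} to obtain the integral \eqref{20}, then integrates monomials term by term, displaying only the case $(i,j,k)=(1,0,0)$ and asserting the rest ``follow in the same manner.'' Your Dirichlet$(1,1,1,1)$ packaging is a correct and more systematic organization of the same computation: the three iterated integrals in $u,v,w$ telescope through the Beta formula to give $\mathbb{E}[\lambda_1^{m_1}\lambda_2^{m_2}\lambda_3^{m_3}\lambda_4^{m_4}]=6\,m_1!m_2!m_3!m_4!/(m_1+m_2+m_3+m_4+3)!$, and the resulting weights ($\mathbb{E}[\lambda_i^2]=\tfrac1{10}$, $\mathbb{E}[\lambda_i\lambda_j]=\tfrac1{20}$, etc.) do collapse, after matching multinomial coefficients, to the common prefactors $\tfrac14,\tfrac1{10},\tfrac1{20},\tfrac1{35}$ of \eqref{7}--\eqref{18} (reading the paper's sums such as $\sum_{1\leq i<j\leq 4}a_i^2a_j$ as running over both placements of the exponent, which is what the paper's own numerics require).

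However, the step you explicitly deferred---the coincidence bookkeeping for $\mu_{111}$---is exactly where the proposal cannot be completed as stated, because \eqref{19} is false. Carrying out your case split with $\mathbb{E}[\lambda_i\lambda_j\lambda_k]=\tfrac1{120},\tfrac1{60},\tfrac1{20}$ in the distinct, one-pair and triple cases yields
\[
\mu_{111}=\frac{1}{120}\left(\sum_{i=1}^{4}a_i\sum_{j=1}^{4}b_j\sum_{k=1}^{4}c_k+\sum_{i=1}^{4}a_ib_i\sum_{k=1}^{4}c_k+\sum_{i=1}^{4}b_ic_i\sum_{k=1}^{4}a_k+\sum_{i=1}^{4}a_ic_i\sum_{k=1}^{4}b_k+2\sum_{i=1}^{4}a_ib_ic_i\right),
\]
not \eqref{19}: the three cross terms are missing there and the prefactor is $\tfrac1{60}$ instead of $\tfrac1{120}$. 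Two independent checks confirm this. On the standard simplex with vertices $(0,0,0),(1,0,0),(0,1,0),(0,0,1)$ one computes directly $\mu_{111}=6\int xyz\,dV=6/720=1/120$, which the displayed formula reproduces while \eqref{19} gives $1/60$. And for the paper's own example (Table 1, $n=10000$) the displayed formula gives $4064/120\approx 33.87$, matching the reported $\eta_{111}\approx 33.83$, whereas \eqref{19} gives $2478/60=41.3$. So your method is sound and, executed honestly, it \emph{disproves} part of the statement rather than proving it; the error propagates into the objective \eqref{31} (whose permutation-dependent part should include the pairing-sensitive cross terms), though the determinant-sign constraints there may mask it in practice. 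The paper's proof never displays the $\mu_{111}$ integration, which is presumably how the slip went unnoticed; your only gap is that you stopped at the plan instead of running the bookkeeping that would have exposed it.
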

\begin{proof}
From (\ref{3}) and (\ref{4}) we deduce that the mixed moment $(i,j,k)$ for the random vector $(X,Y,Z)\sim Unif(I(A_1A_2A_3A_4))$ is given by
\begin{eqnarray}
\mu_{ijk}&=&\mathbb{E}[X^i Y^j Z^k]\nonumber \\&=&\iiint\limits_{\mathbb{R}^3} x^iy^jz^kf(x,y,z)dxdydz\nonumber\\
&=&\iiint\limits_{I(A_1A_2A_3A_4)}   \frac{6x^iy^jz^k}{|\Delta|} dxdydz\nonumber.
\end{eqnarray}

Therefore, applying Lemma \ref{L1} we obtain
\begin{equation}\label{20}
\mu_{ijk}=6\int\limits_{0}^{1}\int\limits_{0}^{1}\int\limits_{0}^{1}  vw^2E_1(u,v,w)E_2(u,v,w),E_3(u,v,w)  dudvdw,
\end{equation}
where
\begin{eqnarray}
E_1(u,v,w)&=&\left[(1-w)a_1+(1-v)wa_2+(1-u)vwa_3+uvwa_4\right]^i\nonumber \\
E_2(u,v,w)&=&\left[(1-w)b_1+(1-v)wb_2+(1-u)vwb_3+uvwb_4\right]^j\nonumber\\
E_3(u,v,w)&=&\left[(1-w)c_1+(1-v)wc_2+(1-u)vwc_3+uvwc_4\right]^k\nonumber.
\end{eqnarray}

Taking now $(i,j,k)=(1,0,0)$ in (\ref{20}), we derive
\begin{eqnarray}
\mu_{100}&=&6\int\limits_{0}^{1}\int\limits_{0}^{1}\int\limits_{0}^{1}  vw^2[(1-w)a_1+(1-v)wa_2+(1-u)vwa_3+uvwa_4]  dudvdw\nonumber\\
&=&6a_1\int\limits_{0}^{1}\int\limits_{0}^{1}\int\limits_{0}^{1}v(1-w)w^2dudvdw+6a_2\int\limits_{0}^{1}\int\limits_{0}^{1}\int\limits_{0}^{1}(1-v)vw^3dudvdw\nonumber\\
&&+6a_3\int\limits_{0}^{1}\int\limits_{0}^{1}\int\limits_{0}^{1}(1-u)v^2w^3dudvdw+6a_4\int\limits_{0}^{1}\int\limits_{0}^{1}\int\limits_{0}^{1}uv^2w^3dudvdw\nonumber\\
&=&\frac{a_1+a_2+a_3+a_4}{4}\nonumber.
\end{eqnarray}

Therefore we obtain (\ref{7}) and the expressions for the relations (\ref{8})-(\ref{19}) follow in the same manner, taking different particular cases of the triple $(i,j,k)$ in the equation (\ref{20}) and using Lemma  \ref{L1}.
\end{proof}

\section{The estimation procedure}

Let $(\alpha_r, \beta_r, \gamma_r)$ denote some estimates of the Cartesian coordinates  $(a_r, b_r, c_r)$, $r\in\{1,2,3,4\}$, of the vertices of the tetrahedron $A_1A_2A_3A_4$. In this section we will apply the classical method of Moments \cite{BN,DEL,KF} to obtain the above estimates.
Therefore, we will approximate a mixed theoretical moment by its corresponding empirical moment. Accordingly, the values $(\alpha_r, \beta_r, \gamma_r)$, $r\in\{1,2,3,4\}$, are just solution of the following nonlinear system of equations
\begin{equation}\label{21}
\mu_{ijk}=\eta_{ijk},\ (i,j,k)\in M,
\end{equation}
where $\mu_{ijk}$ is the mixed moment $(i,j,k)$, $\eta_{ijk}$ is the empirical moment $(i,j,k)$ and the set $M$ is defined by the triples: $(1,0,0),(0,1,0),(0,0,1),(2,0,0),(0,2,0)$, $(0,0,2),(3,0,0),(0,3,0),(0,0,3),(4,0,0),(0,4,0),(0,0,4)$. We recall that the empirical moment $(i,j,k)$ of the quantities $(x_s,y_s,z_s)$, $1\leq s\leq n$, is given by
\begin{equation}\label{22}
\eta_{ijk}=\frac{1}{n}\sum_{s=1}^{n}x_s^iy_s^jz_s^k.
\end{equation}

Using now Proposition \ref{P1}, it is easy to see that the system of equations (\ref{21}) can be divided into three systems of equations
$S_1$, $S_2$ and $S_3$ with separable variables, as follows:
\begin{eqnarray*}\label{S1}
S_1:\ \left\{
        \begin{array}{ll}
          \displaystyle \sum_{i=1}^{4}a_i=4\eta_{100} & \hbox{} \\
           \displaystyle\sum_{i=1}^{4}a_i^2+\sum_{1\leq i<j\leq4}a_ia_j=10\eta_{200} & \hbox{} \\
          \displaystyle\sum_{i=1}^{4}a_i^3+\sum_{1\leq i<j\leq4}a_i^2a_j+\sum_{1\leq i<j<k\leq4}a_ia_ja_k=20\eta_{300} & \hbox{} \\
          \displaystyle\sum_{i=1}^{4}a_i^4+\sum_{1\leq i<j\leq4}a_i^3a_j+\sum_{1\leq i<j<k\leq4}a_i^2a_ja_k+a_1a_2a_3a_4=35\eta_{400} & \hbox{}
        \end{array},
      \right.
\end{eqnarray*}
\begin{eqnarray*}\label{S2}
S_2:\ \left\{
        \begin{array}{ll}
          \displaystyle \sum_{i=1}^{4}b_i=4\eta_{010} & \hbox{} \\
           \displaystyle\sum_{i=1}^{4}b_i^2+\sum_{1\leq i<j\leq4}b_ib_j=10\eta_{020} & \hbox{} \\
          \displaystyle\sum_{i=1}^{4}b_i^3+\sum_{1\leq i<j\leq4}b_i^2b_j+\sum_{1\leq i<j<k\leq4}b_ib_jb_k=20\eta_{030} & \hbox{} \\
          \displaystyle\sum_{i=1}^{4}b_i^4+\sum_{1\leq i<j\leq4}b_i^3b_j+\sum_{1\leq i<j<k\leq4}b_i^2b_jb_k+b_1b_2b_3b_4=35\eta_{040} & \hbox{}
        \end{array}
      \right.
\end{eqnarray*}
and
\begin{eqnarray*}\label{S3}
S_3:\ \left\{
        \begin{array}{ll}
          \displaystyle \sum_{i=1}^{4}c_i=4\eta_{001} & \hbox{} \\
           \displaystyle\sum_{i=1}^{4}c_i^2+\sum_{1\leq i<j\leq4}c_ic_j=10\eta_{002} & \hbox{} \\
          \displaystyle\sum_{i=1}^{4}c_i^3+\sum_{1\leq i<j\leq4}c_i^2c_j+\sum_{1\leq i<j<k\leq4}c_ic_jc_k=20\eta_{003} & \hbox{} \\
          \displaystyle\sum_{i=1}^{4}c_i^4+\sum_{1\leq i<j\leq4}c_i^3c_j+\sum_{1\leq i<j<k\leq4}c_i^2c_jc_k+c_1c_2c_3c_4=35\eta_{004} & \hbox{}
        \end{array}.
      \right.
\end{eqnarray*}

Now we are able to state the following result concerning the solutions of the system $S_1$.
\begin{thm}\label{T1}
An arbitrary solution $(\alpha_1,\alpha_2,\alpha_3,\alpha_4)$ of the system $S_1$ has the form $\left(z_{\sigma(1)1},z_{\sigma(2)1},z_{\sigma(3)1},z_{\sigma(4)1}\right),$
where the function $\sigma:\{1,2,3,4\}\rightarrow\{1,2,3,4\}$ is a specified permutation and $z_{\sigma(1)1},z_{\sigma(2)1},z_{\sigma(3)1},z_{\sigma(4)1}$ are just the four roots of the polynomial equation
\begin{equation}\label{23}
z^4-\lambda_{11}z^3+\lambda_{21}z^2-\lambda_{31}z+\lambda_{41}=0,
\end{equation}
where
\begin{eqnarray}\label{24}
\left\{
  \begin{array}{ll}
    \lambda_{11}=4\eta_{100} & \hbox{} \\
    \lambda_{21}=16\eta_{100}^2-10\eta_{200} & \hbox{} \\
    \lambda_{31}=64\eta_{100}^3+20\eta_{300}-80\eta_{100}\eta_{200} & \hbox{} \\
    \lambda_{41}=256\eta_{100}^4-35\eta_{400}+160\eta_{100}\eta_{300}+100\eta_{200}^2-480\eta_{100}^2\eta_{200} & \hbox{}
  \end{array}.
\right.
\end{eqnarray}
\end{thm}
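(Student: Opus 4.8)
The plan is to read $S_1$ entirely through the lens of symmetric functions. By Vieta's formulas, the quartic \eqref{23} is nothing but $\prod_{i=1}^{4}(z-\alpha_i)=0$ as soon as its coefficients are the elementary symmetric polynomials of the four numbers $\alpha_i$; hence the statement is \emph{equivalent} to showing that every solution $(\alpha_1,\alpha_2,\alpha_3,\alpha_4)$ of $S_1$ satisfies
\[
e_1=\sum_i\alpha_i=\lambda_{11},\quad e_2=\sum_{i<j}\alpha_i\alpha_j=\lambda_{21},\quad e_3=\sum_{i<j<k}\alpha_i\alpha_j\alpha_k=\lambda_{31},\quad e_4=\alpha_1\alpha_2\alpha_3\alpha_4=\lambda_{41}.
\]
The permutation $\sigma$ merely records that $S_1$ is symmetric in its four unknowns, so a solution is pinned down only as an unordered root multiset; all that remains is to force the values of $e_1,\dots,e_4$ from the four equations.

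First I would observe that, once the monomials are collected, the left-hand side of the $k$-th equation of $S_1$ is exactly the complete homogeneous symmetric polynomial $h_k(a_1,a_2,a_3,a_4)$, i.e.\ the sum of \emph{all} degree-$k$ monomials in the four variables, each with coefficient $1$. Thus, using Proposition \ref{P1}, the system collapses to
\[
h_k=H_k,\qquad H_1=4\eta_{100},\ H_2=10\eta_{200},\ H_3=20\eta_{300},\ H_4=35\eta_{400},\qquad k=1,2,3,4,
\]
which also explains the constants $4,10,20,35$: they are precisely $\binom{k+3}{3}$. This reformulation is the structural heart of the argument.

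Next I would invoke the Newton--Girard duality between the $h_k$ and the $e_k$, encoded by $\left(\sum_{j\ge0}e_jt^j\right)\left(\sum_{j\ge0}(-1)^jh_jt^j\right)=1$, equivalently the recursion $\sum_{i=0}^{n}(-1)^ie_ih_{n-i}=0$ for $n\ge1$ (with $e_0=h_0=1$). The decisive feature is that this system is \emph{triangular}: the $n$-th identity contains $e_n$ with coefficient $(-1)^n\neq0$, so it determines $e_n$ from $e_1,\dots,e_{n-1}$ and $h_1,\dots,h_n$. Solving successively gives $e_1=h_1$, $e_2=h_1^2-h_2$, $e_3=h_3-e_1h_2+e_2h_1$ and $e_4=e_1h_3-e_2h_2+e_3h_1-h_4$; substituting the values $H_k$ and simplifying then reproduces exactly the closed forms \eqref{24}. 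Since this triangular solve forces one and the same quadruple $(e_1,e_2,e_3,e_4)$ regardless of which solution we began with, $\prod_i(z-\alpha_i)$ coincides with \eqref{23}, so the components of any solution are, after a relabelling $\sigma$, precisely the four roots of \eqref{23}.

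The main obstacle I anticipate is purely at the bookkeeping level and is twofold: verifying carefully that each left-hand side really collapses to $h_k$ (that every degree-$k$ monomial appears with coefficient $1$), which is what licenses the clean reduction above and must be cross-checked against Proposition \ref{P1}; and the algebra of the back-substitution for $e_3$ and especially $e_4$, where the several cross terms $\eta_{100}\eta_{300}$, $\eta_{200}^2$ and $\eta_{100}^2\eta_{200}$ have to be combined with the correct signs and multiplicities to land on \eqref{24}. Everything after that is formal: the fundamental theorem on symmetric polynomials (Vieta) turns the computed $e_i$ back into the quartic, completing the proof.
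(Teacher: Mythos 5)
Your proposal is correct and takes essentially the same route as the paper: both arguments reduce $S_1$ via Vieta's formulas to computing the elementary symmetric functions $e_1,\dots,e_4$ of the unknowns from the left-hand sides of the system (which are precisely the complete homogeneous sums $h_1,\dots,h_4$, with $h_k=\binom{k+3}{3}\eta_{k00}$), arriving at the same closed forms \eqref{24}. The only difference is presentational: the paper verifies the identities $e_1=h_1$, $e_2=h_1^2-h_2$, $e_3=h_1^3+h_3-2h_1h_2$, $e_4=h_1^4-3h_1^2h_2+2h_1h_3+h_2^2-h_4$ by direct expansion, whereas you derive them systematically from the triangular Newton--Girard recursion $\sum_{i=0}^{n}(-1)^ie_ih_{n-i}=0$, which also makes explicit the uniqueness of $(e_1,\dots,e_4)$ that the paper leaves implicit.
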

\begin{proof}
Taking into account the structure of the system $S_1$ and using Vieta's formulas, we deduce that the coefficients of the polynomial equation (\ref{23}) have the form:
\begin{eqnarray*}
\lambda_{11}&=&\alpha_1+\alpha_2+\alpha_3+\alpha_4=4\eta_{100},\\
\lambda_{21}&=&\left(\displaystyle \sum_{i=1}^{4}\alpha_i\right)^2-\left(\sum_{i=1}^{4}\alpha_i^2+\sum_{1\leq i<j\leq4}\alpha_i\alpha_j\right)\\
&=&16\eta_{100}^2-10\eta_{200},\\
\lambda_{31}&=&\displaystyle\left(\sum_{i=1}^{4}\alpha_i\right)^3+\sum_{i=1}^{4}\alpha_i^3+\sum_{1\leq i<j\leq4}\alpha_i^2\alpha_j\\
&&+\sum_{1\leq i<j<k\leq4}\alpha_i\alpha_j\alpha_k-2\sum_{i=1}^{4}\alpha_i\left(\sum_{i=1}^{4}\alpha_i^2+\sum_{1\leq i<j\leq4}\alpha_i\alpha_j\right)\\
&=&64\eta_{100}^3+20\eta_{300}-80\eta_{100}\eta_{200},
\end{eqnarray*}
\begin{eqnarray*}
\lambda_{41}&=&\displaystyle\left(\sum_{i=1}^{4}\alpha_i\right)^4-\sum_{i=1}^{4}\alpha_i^4-\sum_{1\leq i<j\leq4}\alpha_i^3\alpha_j\\
&&-\sum_{1\leq i<j<k\leq4}\alpha_i^2\alpha_j\alpha_k-\alpha_1\alpha_1\alpha_3\alpha_4\\
&&+2\sum_{i=1}^{4}\alpha_i\left(\sum_{i=1}^{4}\alpha_i^3+\sum_{1\leq i<j\leq4}\alpha_i^2\alpha_j+\sum_{1\leq i<j<k\leq4}\alpha_i\alpha_j\alpha_k\right)\\
&&+\left(\sum_{i=1}^{4}\alpha_i^2+\sum_{1\leq i<j\leq4}\alpha_i\alpha_j\right)^2-3\left(\sum_{i=1}^{4}\alpha_i\right)^2\left(\sum_{i=1}^{4}\alpha_i^2+\sum_{1\leq i<j\leq4}\alpha_i\alpha_j\right)\\
&=&256\eta_{100}^4-35\eta_{400}+160\eta_{100}\eta_{300}+100\eta_{200}^2-480\eta_{100}^2\eta_{200}
\end{eqnarray*}
and the conclusion follows now easily.
\end{proof}

Similarly, we obtain the following two results concerning the solutions of the systems $S_2$ and $S_3$.

\begin{thm}\label{T2}
If $z_{12},z_{22},z_{32},z_{42}$ are the roots of the polynomial equation
\begin{equation}\label{25}
z^4-\lambda_{12}z^3+\lambda_{22}z^2-\lambda_{32}z+\lambda_{42}=0,
\end{equation}
where
\begin{eqnarray}\label{26}
\left\{
  \begin{array}{ll}
    \lambda_{12}=4\eta_{010} & \hbox{} \\
    \lambda_{22}=16\eta_{010}^2-10\eta_{020} & \hbox{} \\
    \lambda_{32}=64\eta_{010}^3+20\eta_{030}-80\eta_{010}\eta_{020} & \hbox{} \\
    \lambda_{42}=256\eta_{010}^4-35\eta_{040}+160\eta_{010}\eta_{030}+100\eta_{020}^2-480\eta_{010}^2\eta_{020} & \hbox{}
  \end{array}
\right.
\end{eqnarray}
then the vector  $\left(z_{\tau(1)2},z_{\tau(2)2},z_{\tau(3)2},z_{\tau(4)2}\right)$ is a solution of the system $S_2$, for any permutation $\tau:\{1,2,3,4\}\rightarrow\{1,2,3,4\}$.
\end{thm}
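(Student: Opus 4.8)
The plan is to exploit the fact that $S_2$ is obtained from $S_1$ by the formal substitution $a_i\mapsto b_i$, $\eta_{p00}\mapsto\eta_{0p0}$, so that the algebraic content of Theorem \ref{T1} transfers verbatim. The key structural observation is that every left-hand side appearing in $S_2$ is a \emph{symmetric} polynomial in $b_1,b_2,b_3,b_4$, hence invariant under any permutation of these variables. Consequently it suffices to verify that the four equations hold for the unordered multiset of roots $\{z_{12},z_{22},z_{32},z_{42}\}$; once this is done, every ordering $\left(z_{\tau(1)2},z_{\tau(2)2},z_{\tau(3)2},z_{\tau(4)2}\right)$ is automatically a solution, which is exactly the assertion of the theorem.

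First I would apply Vieta's formulas to the quartic \eqref{25}: since $z_{12},z_{22},z_{32},z_{42}$ are its roots, their elementary symmetric polynomials satisfy
\[
e_1=\lambda_{12},\quad e_2=\lambda_{22},\quad e_3=\lambda_{32},\quad e_4=\lambda_{42}.
\]
Next I would rewrite each left-hand side of $S_2$ in the basis of elementary symmetric polynomials and substitute the values from \eqref{26}. For instance, using $\sum_i b_i^2=e_1^2-2e_2$, the second equation reduces to
\[
\sum_{i=1}^{4}b_i^2+\sum_{1\le i<j\le4}b_ib_j=(e_1^2-2e_2)+e_2=\lambda_{12}^2-\lambda_{22},
\]
which by \eqref{26} equals $16\eta_{010}^2-(16\eta_{010}^2-10\eta_{020})=10\eta_{020}$, precisely the required right-hand side. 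The remaining three equations are handled by the identical mechanism, expressing the degree-$3$ and degree-$4$ symmetric combinations in terms of $e_1,e_2,e_3,e_4$ and collapsing them, via \eqref{26}, to $4\eta_{010}$, $20\eta_{030}$ and $35\eta_{040}$ respectively. These are the same symmetric-function identities already carried out in the proof of Theorem \ref{T1}, now read with $\eta_{p00}$ replaced by $\eta_{0p0}$.

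The only conceptual subtlety is directional: Theorem \ref{T1} establishes necessity (every solution of $S_1$ furnishes the roots of its quartic), whereas Theorem \ref{T2} establishes sufficiency (the roots of \eqref{25} always furnish a solution of $S_2$). Thus one runs the polynomial identities of Theorem \ref{T1} in reverse, beginning from the coefficients $\lambda_{i2}$ and recovering the moment equations. Since the relations \eqref{26} are exactly the Vieta inversions of those identities, this reverse reading is immediate. I therefore expect the main obstacle to be purely one of bookkeeping: correctly expanding each symmetric combination and matching it against the defining formula for the corresponding $\lambda_{i2}$, so that the numerical constants $4,10,20,35$ emerge as claimed. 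No genuinely hard step is involved, because the permutation-invariance established in the first paragraph removes any need to track which root is assigned to which coordinate.
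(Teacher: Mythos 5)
Your proposal is correct and takes essentially the same route as the paper: the paper proves Theorem \ref{T2} only via the remark ``Similarly'' following Theorem \ref{T1}, whose proof is precisely the Vieta-plus-symmetric-function computation you perform, transferred from the $a$-coordinates and $\eta_{p00}$ to the $b$-coordinates and $\eta_{0p0}$. Your two explicit refinements --- that the left-hand sides of $S_2$ are symmetric in $b_1,b_2,b_3,b_4$ (so any ordering $\tau$ of the roots works) and that Theorem \ref{T2} is the sufficiency direction, obtained by running the invertible identities of Theorem \ref{T1} in reverse starting from $e_k=\lambda_{k2}$ --- are exactly the details the paper leaves implicit, and your sample verification $\sum_i b_i^2+\sum_{i<j}b_ib_j=e_1^2-e_2=10\eta_{020}$ checks out.
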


\begin{thm}\label{T3}
If $z_{13},z_{23},z_{33},z_{43}$ are the roots of the polynomial equation
\begin{equation}\label{27}
z^4-\lambda_{13}z^3+\lambda_{23}z^2-\lambda_{33}z+\lambda_{43}=0,
\end{equation}
where
\begin{eqnarray}\label{28}
\left\{
  \begin{array}{ll}
    \lambda_{13}=4\eta_{001} & \hbox{} \\
    \lambda_{23}=16\eta_{001}^2-10\eta_{002} & \hbox{} \\
    \lambda_{33}=64\eta_{001}^3+20\eta_{003}-80\eta_{001}\eta_{002} & \hbox{} \\
    \lambda_{43}=256\eta_{001}^4-35\eta_{004}+160\eta_{001}\eta_{003}+100\eta_{002}^2-480\eta_{001}^2\eta_{002} & \hbox{}
  \end{array}
\right.
\end{eqnarray}
then the vector  $\left(z_{\rho(1)3},z_{\rho(2)3},z_{\rho(3)3},z_{\rho(4)3}\right)$ is a solution of the system $S_3$, for any permutation $\rho:\{1,2,3,4\}\rightarrow\{1,2,3,4\}$.
\end{thm}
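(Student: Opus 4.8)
The plan is to establish Theorem~\ref{T3} by recognizing that the system $S_3$ has exactly the same algebraic structure as the system $S_1$ treated in Theorem~\ref{T1}, with the coordinates $a_i$ replaced by $c_i$ and the empirical moments $\eta_{100},\eta_{200},\eta_{300},\eta_{400}$ replaced by $\eta_{001},\eta_{002},\eta_{003},\eta_{004}$, respectively. Since the proof of Theorem~\ref{T1} uses nothing about the variables beyond the four symmetric relations defining $S_1$, the argument transfers verbatim. Concretely, I would begin by forming the monic degree-four polynomial whose roots are the unknowns $c_1,c_2,c_3,c_4$, namely \eqref{27}, and then identify each coefficient $\lambda_{j3}$ with the $j$-th elementary symmetric polynomial $e_j(c_1,c_2,c_3,c_4)$ via Vieta's formulas.

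The key step is to express each elementary symmetric polynomial $e_j$ in terms of the power-sum-like quantities appearing on the right-hand sides of $S_3$. The first equation gives $e_1=\sum c_i=4\eta_{001}$ directly, so $\lambda_{13}=4\eta_{001}$. For the higher coefficients I would invoke the standard Newton-type identities relating elementary symmetric functions to the quantities $\sum c_i^2+\sum_{i<j}c_ic_j$, $\sum c_i^3+\sum_{i<j}c_i^2c_j+\sum_{i<j<k}c_ic_jc_k$, and the analogous fourth-degree expression. The manipulations are precisely those carried out in the proof of Theorem~\ref{T1}: one writes $e_2=\tfrac{1}{2}\bigl(e_1^2-p_2\bigr)$-type combinations, but adapted to the specific symmetric sums of $S_3$, yielding $\lambda_{23}=16\eta_{001}^2-10\eta_{002}$, $\lambda_{33}=64\eta_{001}^3+20\eta_{003}-80\eta_{001}\eta_{002}$, and $\lambda_{43}=256\eta_{001}^4-35\eta_{004}+160\eta_{001}\eta_{003}+100\eta_{002}^2-480\eta_{001}^2\eta_{002}$, matching \eqref{28}.

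Once the coefficients are matched, the conclusion follows from the fundamental property of Vieta's formulas: the four numbers $c_1,c_2,c_3,c_4$ satisfying $S_3$ are exactly the roots (with multiplicity) of the polynomial \eqref{27}. Hence if $z_{13},z_{23},z_{33},z_{43}$ denote those roots, any assignment of them to the four coordinate slots—that is, any permutation $\rho$—produces a valid solution of $S_3$, because the defining equations of $S_3$ are symmetric in the four unknowns and therefore invariant under relabeling. This is why the statement asserts the solution for \emph{any} permutation $\rho$, rather than a single specified one.

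The main obstacle, such as it is, lies entirely in the bookkeeping of the symmetric-function identities for $\lambda_{33}$ and especially $\lambda_{43}$: verifying that the combination of symmetric sums prescribed by the third and fourth equations of $S_3$ collapses, after substituting $e_1=4\eta_{001}$ and the other relations, into the stated closed forms. Since this computation is identical in form to the one already completed in Theorem~\ref{T1} (merely with $a\rightsquigarrow c$ and the corresponding moment relabeling), I would not repeat it but simply remark that it proceeds \emph{mutatis mutandis}, exactly as indicated by the phrase ``similarly'' preceding the statement.
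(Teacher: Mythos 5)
Your proposal is correct and follows exactly the paper's route: the paper proves Theorem~\ref{T1} by matching the coefficients of the quartic to the symmetric sums of $S_1$ via Vieta's formulas, and then dispatches Theorems~\ref{T2} and~\ref{T3} with the single word ``similarly,'' which is precisely the $a\rightsquigarrow c$, $\eta_{i00}\rightsquigarrow\eta_{00i}$ transfer you carry out. Your added observation that the permutation-invariance of $S_3$ justifies the ``any permutation $\rho$'' clause is a point the paper leaves implicit, but it is the same argument, not a different one.
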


\begin{rem}
From Theorems \ref{T1}, \ref{T2} and \ref{T3} it follows that the estimates for the $x-$, $y-$ and $z-$ coordinates of the vertices $A_1,A_2,A_3,A_4$ of the tetrahedron $A_1A_2A_3A_4$ must be found in the sets  $\{z_{11},z_{21},z_{31},z_{41}\}$,  $\{z_{12},z_{22},z_{32},z_{42}\}$ and $\{z_{13},z_{23},z_{33},z_{43}\}$, respectively. Now, the problem consists in how to combine in a suitable way the roots of the equations (\ref{23}), (\ref{25}) and (\ref{27}) such that the resulting vectors are good estimates of the Cartesian coordinates of the vertices of the tetrahedron $A_1A_2A_3A_4$. In order to solve this problem, we are applying the method of moments with introduce the additional constraint
\begin{equation}\label{29}
\mu_{111}\approx\eta_{111}
\end{equation}
which the estimated vertices must satisfy.

From (\ref{19}) and (\ref{29}) we obtain the relation
\begin{equation}\label{30}
\sum_{i=1}^{4}a_i\sum_{i=1}^{4}b_i\sum_{i=1}^{4}c_i+2\sum_{i=1}^{4}a_ib_ic_i\approx60\eta_{111}.
\end{equation}

Therefore, it is necessary to determine two suitable permutations $\pi$, $\theta$ of the set $\{1,2,3,4\}$ such that the triples \[(z_{11},z_{\pi(1)2},z_{\theta(1)3}),\ (z_{21},z_{\pi(2)2},z_{\theta(2)3}),\ (z_{31},z_{\pi(3)2},z_{\theta(3)3}),\ (z_{41},z_{\pi(4)2},z_{\theta(4)3})\] obtained with the roots of the polynomial equations (\ref{23}), (\ref{25}) and (\ref{27}) to give the best approximation (\ref{30}).

Using now (\ref{24}), (\ref{26}) and (\ref{28}) in (\ref{30}), it follows that it is necessary to determine the permutations $\pi$, $\theta$
 which minimize the expression
 \begin{eqnarray}\label{31}
E(\pi,\theta)&=&|64\eta_{100}\eta_{010}\eta_{001}+2z_{11}z_{\pi(1)2}z_{\theta(1)3}+2z_{21}z_{\pi(2)2}z_{\theta(2)3}\nonumber\\
&&+2z_{31}z_{\pi(3)2}z_{\theta(3)3}+2z_{41}z_{\pi(4)2}z_{\theta(4)3}-60\eta_{111}|.
\end{eqnarray}

We note that there are $24\cdot 24$ enumerations of $\pi, \theta$.
Moreover, in order that an estimation to be valid, all the random points $M_i$, $1\leq i\leq n$, must be located inside the resulting tetrahedron. We note that this condition can be immediately verified. In fact, it is easy to see that a point $M_i(x_i,y_i,z_i)$  is within the tetrahedron of vertices $(z_{11},z_{\pi(1)2},z_{\theta(1)3})$, $(z_{21},z_{\pi(2)2},z_{\theta(2)3})$, $(z_{31},z_{\pi(3)2},z_{\theta(3)3})$, $(z_{41},z_{\pi(4)2},z_{\theta(4)3})$, if and only if the following five determinants have the same sign:
\[
\delta=\begin{vmatrix}
z_{11} & z_{\pi(1)2} & z_{\theta(1)3} & 1 \\
z_{21} & z_{\pi(2)2} & z_{\theta(2)3} & 1 \\
z_{31} & z_{\pi(3)2} & z_{\theta(3)3} & 1 \\
z_{41} & z_{\pi(4)2} & z_{\theta(4)3} & 1
\end{vmatrix},\
\delta_1=
\begin{vmatrix}
x_i & y_i & z_i & 1 \\
z_{21} & z_{\pi(2)2} & z_{\theta(2)3} & 1 \\
z_{31} & z_{\pi(3)2} & z_{\theta(3)3} & 1 \\
z_{41} & z_{\pi(4)2} & z_{\theta(4)3} & 1
\end{vmatrix},\
\delta_2=\begin{vmatrix}
z_{11} & z_{\pi(1)2} & z_{\theta(1)3} & 1 \\
x_i & y_i & z_i & 1 \\
z_{31} & z_{\pi(3)2} & z_{\theta(3)3} & 1 \\
z_{41} & z_{\pi(4)2} & z_{\theta(4)3} & 1
\end{vmatrix},
\]
\[
\delta_3=
\begin{vmatrix}
z_{11} & z_{\pi(1)2} & z_{\theta(1)3} & 1 \\
z_{21} & z_{\pi(2)2} & z_{\theta(2)3} & 1 \\
x_i & y_i & z_i & 1 \\
z_{41} & z_{\pi(4)2} & z_{\theta(4)3} & 1
\end{vmatrix},\
\delta_4=
\begin{vmatrix}
z_{11} & z_{\pi(1)2} & z_{\theta(1)3} & 1 \\
z_{21} & z_{\pi(2)2} & z_{\theta(2)3} & 1 \\
z_{31} & z_{\pi(3)2} & z_{\theta(3)3} & 1 \\
x_i & y_i & z_i & 1
\end{vmatrix}.
\]

We note that the additional constraint \eqref{29} is sufficient to correctly identify the vertices of the tetrahedron, because, even if equation \eqref{31}
would have multiple minima, only valid permutations $\pi$, $\theta$ can satisfy the conditions that all the determinants $\delta,\delta_1,\delta_2,\delta_3,\delta_4$ defined above have the same sign for all given points $M_i$, $1\leq i\leq n$, which are uniformly distributed within the tetrahedron. Otherwise, in order to uniquely identify the vertices of the tetrahedron, it would have been necessary to apply again the method of moments, equating another theoretical moment with an empirical one.

Consequently, we obtain the following algorithm to estimate the Cartesian coordinates of the vertices of the tetrahedron
$A_1A_2A_3A_4$.\\

\textbf{Algorithm $A$} (Estimate the vertices of the tetrahedron)
\begin{itemize}
  \item[Step 1.] Input: the Cartesian coordinates $(x_i,y_i,z_i)$ of the random points $M_i$, $1\leq i\leq n$, uniformly distributed inside the tetrahedron $A_1A_2A_3A_4$.
  \item[Step 2.] Compute the empirical moments $\eta_{100}$, $\eta_{010}$, $\eta_{001}$, $\eta_{200}$, $\eta_{020}$, $\eta_{002}$, $\eta_{300}$, $\eta_{030}$, $\eta_{003}$, $\eta_{400}$, $\eta_{040}$, $\eta_{004}$, $\eta_{111}$ using formula (\ref{22}).
  \item[Step 3.] Determine the roots $z_{11},z_{21},z_{31},z_{41}$ of the polynomial equation (\ref{23}).
  \item[Step 4.] Determine the roots $z_{12},z_{22},z_{32},z_{42}$ of the polynomial equation (\ref{25}).
  \item[Step 5.] Determine the roots $z_{13},z_{23},z_{33},z_{43}$ of the polynomial equation (\ref{27}).
  \item[Step 6.] Find the permutations $\pi$ and $\theta$ of the set $\{1,2,3,4\}$ which minimize the expression $E(\pi,\theta)$ given by (\ref{31}), under constraints that all the determinants $\delta,\delta_1,\delta_2,\delta_3,\delta_4$ defined above have the same sign.
  \item[Step 7.] Output: the  estimates \[(z_{11},z_{\pi(1)2},z_{\theta(1)3}),\ (z_{21},z_{\pi(2)2},z_{\theta(2)3}),\ (z_{31},z_{\pi(3)2},z_{\theta(3)3}),\ (z_{41},z_{\pi(4)2},z_{\theta(4)3})\] for the Cartesian coordinates of the vertices of the tetrahedron $A_1A_2A_3A_4$.
\end{itemize}

\end{rem}

\section{A Monte Carlo validation}

In this section we intend to demonstrate the accuracy of the Algorithm $A$ using a Monte Carlo simulation technique. For this reason we will apply a procedure  developed by Rocchini and Cignoni \cite{RC} for producing random points uniformly distributed in a specified tetrahedron, which generalizes a method given by Turk in \cite{TU} for generate random points inside a triangle. The algorithm developed by Rocchini and Cignoni generates random
points in a cube, and then folds the cube into the barycentric space of the
tetrahedron in a way that preserves uniformity, as follows.\\

\textbf{Algorithm $B$} (Generate an uniform point inside a tetrahedron - see \cite{RC})
\begin{itemize}
  \item[Step 1.] Input: the Cartesian coordinates $(a_i,b_i,c_i)$, $1\leq i\leq4$, for the vertices of the tetrahedron  $A_1A_2A_3A_4$.
  \item[Step 2.] Generate $s$, $t$, $u$ three independent random variates uniformly distributed in the interval [0,1].
  \item[Step 3.] Cut the cube $[0,1]^3$ with the plane $s+t=1$ into two triangular prisms of equal volume, and then folding all the points falling beyond the plane $s+t=1$ (i.e. in the upper prism) into the lower prism. Calculate the new $(s,t,u)$ values as follows
            \[
            (s,t,u)=\left\{
                      \begin{array}{ll}
                        (s,t,u), & \hbox{if $s+t\leq 1$;} \\
                        (1-s,1-t,u), & \hbox{if $s+t>1$.}
                      \end{array}
                    \right.
            \]
    \item[Step 4.] Cut and fold the resulting triangular prism in Step 3 with the two planes $t+u=1$ and $s+t+u=1$. Calculate the new $(s,t,u)$ values as follows
\[
(s,t,u)=\left\{
          \begin{array}{ll}
            (s,t,u), & \hbox{if $s+t+u\leq 1$;} \\
            (s,1-u,1-s-t), & \hbox{if $s+t+u>1$ and $t+u>1$;} \\
            (1-t-u,t,s+t+u-1), & \hbox{if $s+t+u>1$ and $t+u\leq1$.}
          \end{array}
        \right.
\]
    \item[Step 5.] Find the barycentric coordinates $(a,s,t,u)$ of the random point, where $(s,t,u)$  are obtained in Step 4 and $a$ is given by \[a=1-s-t-u.\]
    \item[Step 6.] Calculate the Cartesian coordinates $(x,y,z)$ of the random point $P$ with uniform distribution inside the tetrahedron $A_1A_2A_3A_4$, as follows
    \begin{eqnarray*}
    x&=&(1-s-t-u)a_1+sa_2+ta_3+ua_4,\\
    y&=&(1-s-t-u)b_1+sb_2+tb_3+ub_4,\\
    z&=&(1-s-t-u)c_1+sc_2+tc_3+uc_4.
    \end{eqnarray*}
    \item[Step 7.] Output: the Cartesian coordinates $(x,y,z)$ of the point $P$.
\end{itemize}

\begin{ex}
By using Algorithm $B$ we will generate samples  $M_i(x_i,y_i,z_i)$, $1\leq i\leq n$, of uniform random points inside the tetrahedron $A_1A_2A_3A_4$, where the Cartesian coordinates of the vertices  $A_1,A_2,A_3,A_4$ are given in \emph{Table} \ref{table:nonlin}.

\begin{table}[ht]
\caption{The Cartesian coordinates for $A_1,A_2,A_3,A_4$.} 
\centering 
\begin{tabular}{|c|c|c|c|c|c|c|c|c|c|c|c|} 
\hline 
$a_1$ & $b_1$ & $c_1$ & $a_2$ & $b_2$ & $c_2$ & $a_3$ & $b_3$ & $c_3$ & $a_4$ & $b_4$ & $c_4$\\ [0.5ex] 
\hline
1 & 5 & 1 & 2 & 3 & 7 & 3 & 4 & 2 & 4 & 2 & 6 \\ 
 [1ex] 
\hline 
\end{tabular}
\label{table:nonlin} 
\end{table}
The sample volume is $n=10000$ and the sample data file is available at the web address \url{https://gabrieleduardvilcu.files.wordpress.com/2017/04/t10000.pdf}.
Next we will apply the Algorithm $A$ in order to estimate the coordinates of the vertices of the tetrahedron.\\

\begin{itemize}
  \item[Step 1.] We consider as input data the Cartesian coordinates of the random points $M_i$, $1\leq i\leq 10000$, generated above.\\
  \item[Step 2.] Applying (\ref{22}) we obtain the empirical moments:\\

  $\eta_{100}=2.49240083718441,\ \eta_{010}=3.50450121828043,$

  $\eta_{001}=4.00141473720768,\ \eta_{200}=6.46040997915774,$

  $\eta_{020}=12.526937461275,\ \eta_{002}=17.2986562458219,$

  $\eta_{300}=17.3390669472659,\ \eta_{030}=45.6222004210248,$

  $\eta_{003}=79.5221622781012,\ \eta_{400}=48.0003817478633,$

  $\eta_{040}=169.098093705001,\ \eta_{004}=383.788813677645,$

  $\eta_{111}=33.833772770505.$\\

  \item[Step 3.] Using (\ref{24}), we derive\\

  $\lambda_{11}=9.96960334873765,\ \lambda_{21}=34.7888911395835,$

  $\lambda_{31}=49.5355349250867,\ \lambda_{41}=23.6036769044258.$ \\

  Therefore we deduce that the roots of the polynomial equation (\ref{23}) are \\

  $z_{11}=0.981455830506285,\ z_{21}=2.035278521164251,$

  $z_{31}=2.957589241805645,\ z_{41}=3.995279755261462$.\\

  \item[Step 4.] Using (\ref{26}), we obtain\\

  $\lambda_{12}=14.0180048731217,\ \lambda_{22}=71.2350860101139,$

  $\lambda_{32}=154.991087473072,\ \lambda_{42}=121.302421329732.$ \\

  Hence we derive that the roots of the polynomial equation (\ref{25}) are \\

  $z_{12}= 2.019249312445422,\ z_{22}=3.021542850832456,$

  $z_{32}= 3.972842068393069,\ z_{42}=5.004370641450761.$\\

  \item[Step 5.] Using (\ref{28}), we deduce\\

  $\lambda_{13}=16.0056589488307,\ \lambda_{23}=83.1945559280657,$

  $\lambda_{33}=153.263012709202,\ \lambda_{43}=85.0189528792398.$ \\

  Therefore we obtain that the roots of the polynomial equation (\ref{27}) are \\

  $z_{13}=0.998218500090449,\ z_{23}=2.038959216983655,$

  $z_{33}=5.961048997780328,\ z_{43}=7.007432233976292.$\\

  \item[Step 6.] The permutations $\pi$ and $\theta$ of the set $\{1,2,3,4\}$ which minimize the expression $E(\pi,\theta)$ given by (\ref{31}), subject to constraints that $\delta,\delta_1,\delta_2,\delta_3,\delta_4$ have the same sign, are
      \[
      \pi=\left(
            \begin{array}{cccc}
              1 & 2 & 3 & 4 \\
              4 & 2 & 3 & 1 \\
            \end{array}
          \right),\ \theta=\left(
            \begin{array}{cccc}
              1 & 2 & 3 & 4 \\
              1 & 4 & 2 & 3 \\
            \end{array}
          \right).
      \]

  \item[Step 7.] We obtain the following estimated values $(\alpha_1,\beta_1,\gamma_1)$,  $(\alpha_2,\beta_2,\gamma_2)$, $(\alpha_3,\beta_3,\gamma_3)$, $(\alpha_4,\beta_4,\gamma_4)$ for the Cartesian coordinates of the vertices of the tetrahedron $A_1A_2A_3A_4$:
      \begin{eqnarray}
      (\alpha_1,\beta_1,\gamma_1)&=&(z_{11},z_{\pi(1)2},z_{\theta(1)3})\nonumber\\
      &=&(z_{11},z_{42},z_{13})\nonumber\\
      &=&(0.981455830506285,5.004370641450761,0.998218500090449),\nonumber
      \end{eqnarray}
        \begin{eqnarray}
      (\alpha_2,\beta_2,\gamma_2)&=&(z_{21},z_{\pi(2)2},z_{\theta(2)3})\nonumber\\
      &=&(z_{21},z_{22},z_{43})\nonumber\\
      &=&(2.035278521164251,3.021542850832456,7.007432233976292),\nonumber
        \end{eqnarray}
          \begin{eqnarray}
      (\alpha_3,\beta_3,\gamma_3)&=&(z_{31},z_{\pi(3)2},z_{\theta(3)3})\nonumber\\
      &=&(z_{31},z_{32},z_{23})\nonumber\\
      &=&(2.957589241805645,3.972842068393069,2.038959216983655),\nonumber
        \end{eqnarray}
        \begin{eqnarray}
      (\alpha_4,\beta_4,\gamma_4)&=&(z_{41},z_{\pi(4)2},z_{\theta(4)3})\nonumber\\
      &=&(z_{41},z_{12},z_{33})\nonumber\\
      &=&(3.995279755261462,2.019249312445422,5.961048997780328).\nonumber
        \end{eqnarray}
\end{itemize}
The above estimated values are listed in \emph{Table} \ref{table:nonlin3}. Comparing now the estimates with the real coordinates in \emph{Table} \ref{table:nonlin}, we remark that the estimated values of the coordinates are enough close with the real coordinates  of the vertices of the tetrahedron $A_1A_2A_3A_4$. Therefore, the Monte Carlo procedure validates the correctness of the Algorithm $A$ proposed in the present paper.
\end{ex}

\begin{rem}
We note that, in general, larger sample sizes produce smaller standard errors, which estimate the vertices of the tetrahedron with higher precision. Hence, the accuracy increases when the sample size increases.  In order to demonstrate this, by using Algorithm $B$, we can generate samples of uniform random points inside the same tetrahedron $A_1A_2A_3A_4$ whose vertices are given in \emph{Table} \ref{table:nonlin}, using different sample sizes. We apply again the Algorithm $A$ in order to estimate the coordinates of the vertices of the tetrahedron in each case. Some results are listed in \emph{Table} \ref{table:nonlin3} and \emph{Table} \ref{table:nonlin4} (the sample data files,  as well as the resulting quartic polynomials and their roots are available at the web address  \url{https://gabrieleduardvilcu.files.wordpress.com/2017/04/s1.pdf}).

\begin{table}[ht]
\caption{Estimation of the Cartesian coordinates of the vertices $A_1,A_2,A_3,A_4$ of the tetrahedron, when the sample size is $n\in\{1000,10000,30000,50000\}$.} 
\centering 
\begin{tabular}{ |c|c|c|c|c|  }
 \hline
& $n=1000$ & $n=10000$  & $n=30000$ & $n=50000$ \\
 \hline
$\alpha_1$ & 0.900169004177569 & 0.981455830506285& 1.000711014522624 & 1.000232625196211 \\
$\beta_1$  & 5.060437422454326 & 5.004370641450761 & 4.997904587623505 & 5.000283898674374\\
$\gamma_1$ & 0.999636598543183 & 0.998218500090449 & 0.996571940347122 & 0.999408477006355 \\
\hline
$\alpha_2$ & 2.063651216768535 & 2.035278521164251 & 1.994798523350653 & 1.996696865691069 \\
$\beta_2$ & 2.956888633281736 & 3.021542850832456 & 3.01670615938827 & 3.003518292061326 \\
$\gamma_2$ & 7.005931349261772 & 7.007432233976292 & 6.982288202530113 & 6.967786607483619 \\
\hline
$\alpha_3$& 3.001229014923811 & 2.957589241805645 & 2.987003844753415& 2.994433366420113\\
$\beta_3$& 3.983736705624392 & 3.972842068393069 &  3.995224814684614 & 3.998343042049299 \\
$\gamma_3$& 1.937576328362511 & 2.038959216983655 & 2.02606593218243 & 2.00143920258018 \\
\hline
$\alpha_4$& 3.989685187967973 & 3.995279755261462 & 4.005368172726129 & 3.997478513641908 \\
$\beta_4$& 2.031101003509045  & 2.019249312445422 & 2.002717025163117 & 2.007416732186097 \\
$\gamma_4$& 6.011872224505431 & 5.961048997780328 & 5.976385993102144 & 6.01901047801575 \\
 \hline
\end{tabular}
\label{table:nonlin3} 
\end{table}

\begin{table}[ht]
\caption{Estimation of the Cartesian coordinates of the vertices $A_1,A_2,A_3,A_4$ of the tetrahedron, when the sample size is $n\in\{70000,90000,110000,130000\}$.} 
\centering 
\begin{tabular}{ |c|c|c|c|c|  }
 \hline
 & $n=70000$ & $n=90000$ & $n=110000$ & $n=130000$\\
 \hline
$\alpha_1$ & 1.003105865295161 & 1.004527508419694 & 1.005908384957321 &   1.004563178701584 \\
$\beta_1$  & 4.99550480505 & 4.996572933666054 & 4.996874637244825 & 4.997085464604526\\
$\gamma_1$ & 1.006973943599736 & 1.005569625063884 &  1.005525834804714 &  1.005443736156998\\
\hline
$\alpha_2$ &   1.997899801933102 & 1.994761292346172 & 1.996505871342539 & 1.997080967404208\\
$\beta_2$ & 3.00924729807257 & 3.006009800104706 & 3.003001585321237 &  3.001073196705324\\
$\gamma_2$ & 6.983912371495364 & 6.995712237697553 & 7.000067186843735 &   6.997260635821995\\
\hline
$\alpha_3$ & 2.992415975197417 & 2.991612914024596 & 2.992789808210174 & 2.994439000075413 \\
$\beta_3$ & 3.996354421254385 & 3.997515526964327  & 3.997543539027272 & 3.998591070496656\\
$\gamma_3$ &  2.011169554063575 & 2.008509722164336 &   2.00122548779379 &  2.002206053742553\\
\hline
$\alpha_4$ &  4.000763964265582 & 4.001485378474737 & 4.000474073657129 & 4.001167686028979 \\
$\beta_4$ & 2.00318031010065 & 2.003537621477507 & 2.004098062824868 &  2.003664771868191\\
$\gamma_4$ & 5.995614482020844 & 5.991712343275934 & 5.995947427021777 &  5.998250617490456\\
 \hline
\end{tabular}
\label{table:nonlin4} 
\end{table}

\begin{table}[ht]
\caption{Calculated standard error of the estimate for selected sample sizes.} 
\centering 
\begin{tabular}{ |c|c|  }
 \hline
  \emph{Sample size} ($n$) & \emph{Standard error of the estimate}($\sigma_{est}$) \\
 \hline
 1000 & 0.045590228800062 \\
 10000 & 0.025971318494209\\
 30000 & 0.01323336336239 \\
 50000 & 0.011254793987414 \\
 70000 & 0.007375192691978 \\
 90000 & 0.005609129893072 \\
 110000 & 0.003972822443532 \\
 130000 & 0.003313593928035 \\
 \hline
\end{tabular}
\label{table:nonlin5} 
\end{table}

\begin{figure}
 \includegraphics[width=0.75\textwidth]{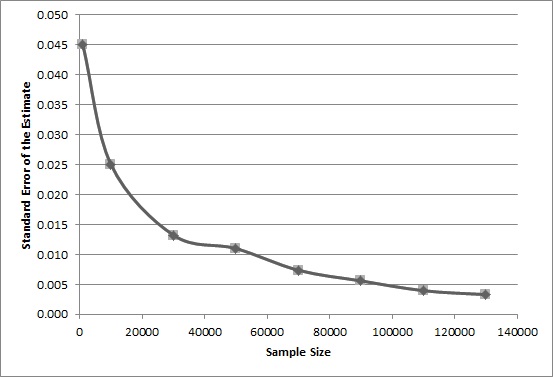}
\caption{Relationship between sample size and standard error of the estimate}
\label{fig:1}       
\end{figure}

The corresponding standard error of the estimate for different sample sizes, computed as
\[\sigma_{est}=\sqrt{\frac{\displaystyle\sum_{i=1}^{4}[(a_i-\alpha_i)^2+(b_i-\beta_i)^2+(c_i-\gamma_i)^2]}{12}},\]
 is given in \emph{Table} \ref{table:nonlin5}, while the effect of sample size on the standard error is plotted in \emph{Fig.} \ref{fig:1}. From figure, one can observe that the amount by which the standard error of the estimate decreases is most substantial between samples sizes less than 70000. In contrast, standard error of the estimate does not substantially decrease at sample sizes above 70000.
\end{rem}

\section*{Acknowledgments}
The second author was supported by a grant of Ministry of Research and Innovation, CNCS-UEFISCDI, project number PN-III-P4-ID-PCE-2016-0065, within PNCDI III.

Alina Daniela V\^{I}LCU\\
      Petroleum-Gas University of Ploie\c sti,\\
      Department of Information Technology, Mathematics and Physics,\\
      Bulevardul Bucure\c sti, Nr. 39,
      Ploie\c sti 100680-ROMANIA\\
      e-mail: daniela.vilcu@upg-ploiesti.ro\\

Gabriel Eduard V\^{I}LCU$^{1,2}$ \\
      $^1$University of Bucharest, Faculty of Mathematics and Computer Science,\\
      Research Center in Geometry, Topology and Algebra,\\
      Str. Academiei, Nr. 14, Sector 1,
      Bucure\c sti 70109-ROMANIA\\
      e-mail: gvilcu@gta.math.unibuc.ro\\
      $^1$Petroleum-Gas University of Ploie\c sti,\\
      Faculty of Economic Sciences,\\
      Bulevardul Bucure\c sti, Nr. 39,
      Ploie\c sti 100680-ROMANIA\\
      e-mail: gvilcu@upg-ploiesti.ro\\
\end{document}